\newtheorem{thm}{Theorem}[section]
\newtheorem*{thm*}{Theorem}
\newtheorem*{cor*}{Corollary}
\newtheorem{lem}[thm]{Lemma}
\newtheorem*{lem*}{Lemma}
\newtheorem{prop}[thm]{Proposition}
\newtheorem*{prop*}{Proposition}
\theoremstyle{definition}
\newtheorem*{defn*}{Definition}
\theoremstyle{remark}
\newtheorem*{rem*}{Remark}
\newtheorem*{problem*}{Problem}
\newtheorem*{question*}{Question}
\newcommand{\Q}{\mathbb Q}
\newcommand{\CE}{\mathscr E}
\newcommand{\Z}{\mathbb Z}
\DeclareMathOperator{\pExp}{Exp}
\DeclareMathOperator{\End}{End}
\DeclareMathOperator{\Sym}{Sym}
\title{Five-term relation and Macdonald polynomials}
\author{Adriano Garsia}
\address{UC San Diego, 9500 Gilman Drive  \# 0112, La Jolla, CA 92093-0112, USA}
\author{Anton Mellit}
\address{Faculty of Mathematics, University of Vienna, \\ Oskar-Morgenstern-Platz 1, 1090 Vienna, Austria}
\begin{document}
	\onehalfspacing
\begin{abstract}
The non-commutative five-term relation $T_{1,0} T_{0,1} = T_{0,1} T_{1,1} T_{1,0}$
is shown to hold for certain operators acting on symmetric functions. The
``generalized recursion'' conjecture of Bergeron and Haiman is a corollary
of this result.
\end{abstract}

\maketitle

\section{Introduction}
Suppose we have a family of operators $\{R_{m,n}\}$ acting on some vector space $V$, where $m$, $n$
run over all pairs of integers $m,n\geq 0$. Suppose $R_{0,0}$ is the identity operator. For each pair of relatively prime integers $m,n \geq 0$ let
\[
T_{m,n} = \sum_{k=0}^\infty u^{mk} v^{nk} R_{mk, nk} \in \End[V][[u,v]].
\]
We say that the \emph{five-term relations}
hold if for any integers $m,n,m',n'\geq 0$ such that $m n'-m'n=1$ we have:
\begin{equation}\label{eq:fiveterm}
T_{m,n} T_{m',n'} = T_{m',n'} T_{m+m', n+n'} T_{m,n}.
\end{equation}
For instance, for $(m,n)=(1,0)$ and $(m',n')=(0,1)$ we obtain
\begin{equation}\label{eq:fiveterm0}
T_{1,0} T_{0,1} = T_{0,1} T_{1, 1} T_{1,0}.
\end{equation}

The motivation for the name comes from the fact that Faddeev-Kashaev's quantum dilogarithm \cite{faddeev1994quantum} satisfies a similarly-looking identity. In their setup $\hat U$ and $\hat V$ are arbitrary operators satisfying $\hat U \hat V = q \hat V \hat U$, the quantum dilogarithm is 
\[
\Psi(x) = \prod_{n=1}^\infty (1- x q^n),
\]
the identity is
\[
\Psi(\hat V) \Psi(\hat U) = \Psi(\hat U) \Psi(-\hat U \hat V) \Psi(\hat V),
\]
and they show that it is a deformation of the classical five-term relation
\[
L(x) + L(y) - L(xy) = L\left(\frac{x-xy}{1-xy}\right) + L\left(\frac{y-xy}{1-xy}\right)
\]
for the Rogers dilogarithm
\[
L(x) = \log(1-x)\log(x)/2 - \int_0^x \frac{\log(1-z)}{z} dz.
\]

In \cite{kontsevich2008stability} one can find the identity\footnote{The identities \eqref{eq:fiveterm} are also valid there due to the obvious $SL_2(\Z)$ invariance} \eqref{eq:fiveterm0}, where it is related to wall-crossing for Donaldson-Thomas invariants.

Using relations \eqref{eq:fiveterm}, the operator $T_{m,n}$ for any $m,n$ can be expressed as a composition using $T_{0,1}$, $T_{1,0}$ and their inverses. Thus we obtain, in particular, that $R_{m,n}$ for any $m,n\in\Z_{\geq 0}$ belongs to the algebra generated by the elements of the form $R_{0,k}$ and $R_{k,0}$ ($k\in\Z_{>0}$). Then \eqref{eq:fiveterm}, when expressed in terms of these elements, provide some interesting relations between them. For instance, taking the coefficient of $u^{k} v$ in both sides of \eqref{eq:fiveterm0} we obtain
\[
[R_{k,0}, R_{0,1}] = R_{1,1} R_{k-1,0}.
\]
For $k=1$ we have $[R_{1,0}, R_{0,1}] = R_{1,1}$, thus in general
\[
[R_{k,0}, R_{0,1}] = [R_{1,0}, R_{0,1}] R_{k-1,0}.
\]
It is an interesting problem to describe a complete set of relations between the operators $R_{0,k}$, $R_{k,0}$ which is in some sense smaller than the original set implied by \eqref{eq:fiveterm}.

Here we prove that our relations are satisfied by certain operators acting on the space of symmetric functions in infinitely many variables over the field $\Q(q,t)$. To define these operators we need the modified Macdonald polynomials of \cite{garsia1999explicit}. These are symmetric functions $\tilde H_\mu$ with coefficients in $\Z[q,t]$, orthogonal with respect to the modified Hall scalar product defined in the power sum basis by\footnote{Our definition differs from the one in \cite{garsia1999explicit} by a sign $(-1)^{|\lambda|}$. This does not affect the orthogonality statement.}
\[
(p_\lambda, p_\mu)_* = \prod_{i=1}^{l(\lambda)} (-(1-q^{\lambda_i})(1-t^{\lambda_i})) (p_\lambda,p_\mu).
\]
We recall that the (unmodified) Hall scalar product is given by
\[
(p_\lambda,p_\mu) = \begin{cases}
z_\lambda & \text{ if $\lambda=\mu$,}\\
0 & \text{otherwise,}
\end{cases}
\]
where $z_\lambda = 1^{\alpha_1} \alpha_1! 2^{\alpha_2} \alpha_2!\cdots$ for a partition $\lambda=1^{\alpha_1} 2^{\alpha_2}\cdots$.

Now we will construct the operators. In fact, we have two statements. In the first statement we set 
\begin{equation}\label{setup1}
R_{k,0} = h_k^\perp, \quad R_{0,k}=(-1)^k\Delta'_{e_k},
\end{equation}
where the operator $h_k^\perp$ is the operator conjugate to the operator of multiplication by $h_k$ with respect to the Hall scalar product, $\Delta'_{F}$ for a symmetric function $F$ denotes the operator defined in the basis of the modified Macdonald polynomials as follows:
\[
\Delta_{F} \tilde H_{\lambda} = F[B_\lambda] \tilde H_{\lambda},\quad B_\lambda = \sum_{(r,c)\in\lambda} q^c t^r,
\]
\[
\Delta_{F}' = \Delta_{F'}\quad\text{for $F'[X]=F[-1/M+X]$,}
\]
where $M=(1-q)(1-t)$.
In the second statement
\begin{equation}\label{setup2}
R_{k,0}=(-1)^k\Delta'_{e_k}, \quad R_{0,k} = (-1)^k \underline{e}_k\left[\frac{X}{M}\right],
\end{equation}
where $\underline{e}_k\left[\frac{X}{M}\right]$ denotes the operator of multiplication by $e_k\left[\frac{X}{M}\right]$. The two statements are related by the conjugation with respect to the modified Hall scalar product.

\begin{thm}\label{thm:main}
The operators defined by \eqref{setup1} (alternatively, by \eqref{setup2}) extend to a family of operators $\{R_{m,n}\}$ satisfying the five-term relations \eqref{eq:fiveterm}.
\end{thm}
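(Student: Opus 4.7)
The strategy is to reduce the infinite family of five-term relations to a single base identity and then to verify that identity by an explicit computation in the modified Macdonald basis.

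\emph{Reductions.} I would first observe that the two setups \eqref{setup1} and \eqref{setup2} are interchanged by the adjunction for the modified Hall pairing: this pairing swaps $h_k^\perp$ with $\underline{e}_k[X/M]$ (up to sign), while leaving $\Delta'_{e_k}$ essentially invariant because it is diagonal in the orthogonal basis $\{\tilde H_\lambda\}$. Hence it suffices to prove the theorem for setup \eqref{setup1}. Next, both $T_{1,0}$ and $T_{0,1}$ are invertible in $\End(V)[[u,v]]$ because each has leading term $\id$, so from the base relation
\[
T_{1,0}T_{0,1} \eq T_{0,1}T_{1,1}T_{1,0}
\]
one recovers $T_{1,1} = T_{0,1}^{-1}T_{1,0}T_{0,1}T_{1,0}^{-1}$, and more generally every primitive $T_{m,n}$ can be obtained from $T_{0,1}, T_{1,0}$ by repeatedly applying five-term conjugations along Farey/Stern-Brocot mediants. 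A routine bookkeeping argument on the modular group (or, equivalently, on the Farey graph) shows that, once the base relation holds and the resulting $T_{1,1}$ has the correct shape $\sum_k (uv)^k R_{k,k}$, all the other five-term relations hold automatically.

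\emph{The base case.} I would expand both sides of the base relation in the basis $\{\tilde H_\lambda\}$. The operator $T_{0,1} = \sum_k(-v)^k\Delta'_{e_k}$ is diagonal with a closed-form product eigenvalue built from $B_\lambda - 1/M$; conjugation by $T_{0,1}$ therefore rescales the matrix entries of $T_{1,0} = \sum_k u^k h_k^\perp$ by known ratios of these eigenvalues. The matrix entries of $h_k^\perp$ on the $\tilde H$-basis are the Macdonald Pieri coefficients, which are explicit rational functions in $q,t$ indexed by removable corners (and their subsets). With $R_{k,k}$ provisionally defined as the $(uv)^k$ coefficient of $T_{0,1}^{-1}T_{1,0}T_{0,1}T_{1,0}^{-1}$, the base relation then unfolds into an explicit combinatorial/rational-function identity for these Pieri coefficients.

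\emph{Main obstacle.} The technical heart is the assertion that
\[
T_{0,1}^{-1}T_{1,0}T_{0,1}T_{1,0}^{-1}
\]
is a formal power series in $uv$ alone, i.e.\ that its coefficient of $u^a v^b$ with $a\neq b$ vanishes identically. This is a nontrivial cancellation, and I expect it to be the main labor of the paper. Two attacks suggest themselves: (a) a residue/contour-integral manipulation, writing the Pieri coefficients of $h_k^\perp$ as iterated residues of a product of Macdonald factors and showing that conjugation by the diagonal $T_{0,1}$ effects precisely the shift that kills all off-diagonal powers; (b) an identification of $\{T_{m,n}\}$ with an action of the elliptic Hall algebra $\mathcal{E}$ on symmetric functions in the Schiffmann-Vasserot/Negut framework, where the five-term relation is known abstractly and the content of the theorem becomes matching the explicit boundary operators with the generators of $\mathcal{E}^+$. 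Either approach reduces everything to one computation, from which the full theorem follows by the Farey induction above.
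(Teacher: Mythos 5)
Your identification of the technical heart --- that $T_{0,1}^{-1}T_{1,0}T_{0,1}T_{1,0}^{-1}$ must be a power series in $uv$ alone --- is exactly right, and your reduction of setup \eqref{setup2} to setup \eqref{setup1} via the modified Hall pairing matches the paper. But there are two genuine gaps. First, the claim that once the base relation holds and $T_{1,1}$ has the shape $\sum_k (uv)^k R_{k,k}$, ``all the other five-term relations hold automatically'' by Farey bookkeeping is false. Each five-term relation can be read as a \emph{definition} of $T_{m+m',n+n'}$ from its unique pair of Farey parents, so the relations impose no constraints among themselves; the entire content at every level of the Stern--Brocot tree is that the operator so defined is again a power series in $u^{m+m'}v^{n+n'}$ with constant term $\id$. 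Knowing this for $T_{1,1}$ gives no control over, say, $T_{2,1}=T_{1,1}^{-1}T_{1,0}T_{1,1}T_{1,0}^{-1}$: the commutator of a series in $uv$ with a series in $u$ has no a priori reason to collapse onto powers of $u^2v$. The paper closes the induction with structure you have not built in: the algebra homomorphisms $N^{-1}(L)=\nabla^{-1}L\nabla$ and a partially defined $S^{-1}$ characterized by $\tau^*\tau LF=S^{-1}(L)\tau^*\tau F$, together with the gluing identities $N^{-1}(T_{0,1})=T_{0,1}$, $S^{-1}(T_{1,0})=\pExp\left[\frac{u}{M}\right]T_{1,0}$ and, crucially, the double identification $T_{1,1}=N^{-1}(T_{1,0})=S^{-1}(T_{0,1})$ (after substituting $u\mapsto uv$, resp.\ $v\mapsto uv$). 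Applying $N^{-1}$ and $S^{-1}$ repeatedly to the base identity then simultaneously produces each $T_{m,n}$ in the correct shape and the corresponding five-term relation; without some such $SL_2(\Z)$-equivariance your induction does not close.

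Second, for the base case you offer two possible attacks (residues on Pieri coefficients; elliptic Hall algebra) but carry out neither. The paper's actual argument is more elementary than either: write $W=\Delta_v^{-1}\tau_u\Delta_v\tau_u^{-1}$ and parenthesize it two ways. Grouping as $(\Delta_v^{-1}\tau_u\Delta_v)\tau_u^{-1}$, the Pieri rules show $\Delta_v^{-1}h_k^\perp\Delta_v$ is a polynomial in $v$ of degree $\le k$ with top coefficient $\nabla^{-1}h_k^\perp\nabla$, so $W$ is supported where the $v$-exponent is at most the $u$-exponent. Grouping as $\Delta_v^{-1}(\tau_u\Delta_v\tau_u^{-1})$ and expanding $\Delta'_{e_k}$ in the operators $D_n$, the one-line computation $\tau_uD_n\tau_u^{-1}=D_n-uD_{n-1}$ shows $\tau_u\Delta_{e_k}\tau_u^{-1}$ is a polynomial in $u$ of degree $\le k$ with top coefficient $S^{-1}(\Delta'_{e_k})$. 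The two supports intersect in the diagonal, which yields both the $uv$-collapse and the two expressions for $T_{1,1}$ that drive the induction above. If you pursue your route via the elliptic Hall algebra you would still owe a verification that the explicit operators $h_k^\perp$ and $\Delta'_{e_k}$ match the generators, which is essentially equivalent to the base identity anyway.
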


Bergeron and Haiman conjectured (\cite{bergeron2013tableaux}, Conjecture 6) certain identities between the $\Delta_{e_k}$ and $h_k^\perp$ operators, which imply interesting recursion relations for the Macdonald polynomials. We show that their statement follows by expanding \eqref{eq:fiveterm0}. In fact, it was our attempt to prove Conjecture 6 that led us to the discoveries of the present work. 

\section{The proof}
We introduce extra variables $u$ and $v$ and set\footnote{We apologize for using conflicting notations $\Delta_F$ and $\Delta_v$ and hope this does not cause confusion.}
\[
\tau_u = \sum_{n=0}^\infty u^n h_n^{\perp},
\quad
\Delta_v = \sum_{n=0}^\infty (-v)^n \Delta_{e_n},
\quad
\Delta_v' = \sum_{n=0}^\infty (-v)^n \Delta'_{e_n} = \pExp[v/M] \Delta_v.
\]
Then we have, for any symmetric function $F$,
\begin{align}\label{eq:delta definition}
(\tau_u F)[X] = F[X+u],
\qquad \Delta_v \tilde H_\lambda = \prod_{(r,c)\in \lambda} (1- v q^c t^r) \tilde H_\lambda,
\\ \nonumber
\Delta_v^{-1} = \sum_{n=0}^\infty v^n \Delta_{h_n},\qquad \Delta_v^{-1} \tilde H_\lambda = \prod_{(r,c)\in \lambda} \frac{1}{1- v q^c t^r} \tilde H_\lambda.
\end{align}
It is convenient to modify the Bergeron-Garsia operator $\nabla$ by adding a sign, so that it becomes closer to $\Delta_v$ in shape:
\begin{equation}\label{eq:nabla definition}
\nabla \tilde H_\lambda = \prod_{(r,c)\in \lambda} (-q^c t^r) \tilde H_\lambda = (-1)^{|\lambda|} q^{n'(\lambda)} t^{n(\lambda)} \tilde H_\lambda.
\end{equation}

Now we write the identity (77) of \cite{bergeron2013tableaux}, which is equivalent to Conjecture 6 there, in a generating function form. It claims
\[
\nabla^{-1} h_k^\perp \nabla h_l^\perp = \sum_{r=0}^k (-1)^{k-r} \Delta_{h_r} h_{k+l}^\perp \Delta_{e_{k-r}}\quad (k,l\in\Z_{\geq 0}).
\]
Remember that our $\nabla$ has a sign that conveniently turns $(-1)^r$ to $(-1)^{k-r}$. Now we multiply both sides by $u^k v^l$ and sum up:
\[
\nabla^{-1} \tau_u \nabla \tau_v = \sum_{k,l,r\geq 0}^{r\leq k} (-1)^{k-r} \Delta_{h_r} h_{k+l}^\perp \Delta_{e_{k-r}} u^k v^l.
\]
We change the indexing: $i=k+l$, $j=k-r$, so that the new summation runs over $i,j,r\geq 0$, $i\geq j+r$. We obtain (the old indexes are $k=j+r$, $l=i-(j+r)$):
\[
\sum_{j,r\geq 0, i\geq j+r} (-1)^j \Delta_{h_r} h_i^\perp \Delta_{e_j} u^{j+r} v^{i-(j+r)} = \Delta_{u/v}^{-1} \tau_v \Delta_{u/v}\; \Big|_{v^{\geq 0}},
\]
where the notation $\Big|_{v^{\geq 0}}$ stands for ``keep only the terms with non-negative power of $v$''.
\begin{prop}[Generating function form of the conjecture] Conjecture 6 of \cite{bergeron2013tableaux} is equivalent to
\begin{equation}\label{eq:1}
\nabla^{-1} \tau_u \nabla \tau_v = \Delta_{u/v}^{-1} \tau_v \Delta_{u/v}\; \Big|_{v^{\geq 0}}
\end{equation}
\end{prop}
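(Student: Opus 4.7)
The plan is to expand both sides of \eqref{eq:1} as formal power series in $u,v$ and compare the coefficient of $u^k v^l$ to identity (77). The manipulation preceding the proposition in the excerpt already carries out the forward direction; the same computation, read in reverse, gives the backward direction, so the equivalence will follow.

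I would first expand the left-hand side. Substituting $\tau_u = \sum_k u^k h_k^\perp$ and $\tau_v = \sum_l v^l h_l^\perp$ and pulling the sums outside $\nabla^{\pm 1}$ immediately yields
\[
\nabla^{-1}\tau_u\nabla\tau_v = \sum_{k,l\geq 0} u^k v^l \, \nabla^{-1} h_k^\perp \nabla h_l^\perp,
\]
whose $u^k v^l$-coefficient is the left-hand side of (77). Next I would expand the right-hand side using the series $\Delta_{u/v}^{-1}=\sum_{r\geq 0} (u/v)^r \Delta_{h_r}$ and $\Delta_{u/v}=\sum_{j\geq 0}(-u/v)^j \Delta_{e_j}$ listed at the start of the section, obtaining
\[
\Delta_{u/v}^{-1}\tau_v\Delta_{u/v} = \sum_{r,j,i\geq 0} (-1)^j u^{r+j} v^{i-r-j} \, \Delta_{h_r} h_i^\perp \Delta_{e_j}.
\]
Applying $|_{v^{\geq 0}}$ keeps the terms with $i\geq r+j$, and substituting $k=r+j$, $l=i-r-j$ produces
\[
\sum_{r=0}^k (-1)^{k-r}\, \Delta_{h_r} h_{k+l}^\perp \Delta_{e_{k-r}},
\]
which is the right-hand side of (77). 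Matching these coefficients for every $k,l\geq 0$ gives the stated equivalence.

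I do not expect any real obstacle; this is generating-function bookkeeping. Two small things are worth checking carefully: (a) the truncation $|_{v^{\geq 0}}$ is well-defined because, for each fixed $k$, the coefficient of $u^k$ in $\Delta_{u/v}^{-1}\tau_v\Delta_{u/v}$ is a Laurent polynomial in $v$ with only finitely many negative powers; and (b) the sign modification of $\nabla$ introduced in the excerpt is what turns the naive $(-1)^r$ into the $(-1)^{k-r}$ appearing in (77), and this sign has to be tracked consistently through the reindexing.
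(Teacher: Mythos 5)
Your proposal is correct and follows essentially the same route as the paper: expand both sides in $u,v$, reindex via $k=r+j$, $l=i-r-j$, and match coefficients against identity (77), with the truncation $\big|_{v^{\geq 0}}$ accounting for the constraint $r\leq k$. The two caveats you flag (well-definedness of the truncation and the sign convention built into the modified $\nabla$) are exactly the points the paper also addresses.
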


Now we have the following observation:
\begin{prop}\label{prop:poly1}
For a symmetric function $F$ of degree $d$ the operator
\[
\Delta_v^{-1} F^\perp \Delta_v
\]
is a polynomial in $v$ of degree $\leq d$. The coefficient of $v^d$ is $\nabla^{-1} F^\perp \nabla$.
\end{prop}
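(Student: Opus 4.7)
The plan is to reduce the statement to a direct eigenvalue computation in the $\tilde H_\lambda$-basis, using the diagonal action of $\Delta_v$ and $\nabla$ together with a containment property for $h_n^\perp$. Since $F \mapsto \Delta_v^{-1} F^\perp \Delta_v$ and $F \mapsto \nabla^{-1} F^\perp \nabla$ are both $\Q(q,t)$-linear in $F$, and since $(FG)^\perp = G^\perp F^\perp$, it is enough to treat $F = h_{n_1} \cdots h_{n_k}$ with $n_1 + \cdots + n_k = d$, because such products span the space of symmetric functions of degree $d$.

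The key structural input is the dual Pieri rule for modified Macdonald polynomials: $h_n^\perp \tilde H_\lambda$ is a $\Q(q,t)$-linear combination of $\tilde H_\mu$ with $\mu \subset \lambda$ and $|\mu| = |\lambda| - n$. Iterating on $F = h_{n_1} \cdots h_{n_k}$ gives
\[
F^\perp \tilde H_\lambda = \sum_{\substack{\mu \subset \lambda \\ |\mu| = |\lambda| - d}} c^\lambda_\mu \, \tilde H_\mu
\]
for some scalars $c^\lambda_\mu \in \Q(q,t)$, and what matters for us is the containment $\mu \subset \lambda$, not the precise value of the coefficients.

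With this in hand the rest is a one-line manipulation. Using the eigenvalue formula $\Delta_v^{\pm 1} \tilde H_\nu = \prod_{r,c \in \nu}(1 - v q^c t^r)^{\pm 1} \tilde H_\nu$, and the fact that the denominator product over $\mu$ divides the numerator product over $\lambda$ precisely because $\mu \subset \lambda$, one gets
\[
\Delta_v^{-1} F^\perp \Delta_v \, \tilde H_\lambda = \sum_{\mu \subset \lambda} c^\lambda_\mu \prod_{r,c \in \lambda / \mu}(1 - v q^c t^r) \, \tilde H_\mu,
\]
each factor on the right being a polynomial of degree $|\lambda/\mu| = d$ in $v$. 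Extracting the coefficient of $v^d$ gives $\sum_\mu c^\lambda_\mu \prod_{r,c \in \lambda/\mu}(-q^c t^r) \tilde H_\mu$, and the analogous computation with $\nabla^{\pm 1} \tilde H_\nu = \prod_{r,c \in \nu}(-q^c t^r)^{\pm 1} \tilde H_\nu$ identifies this expression as $\nabla^{-1} F^\perp \nabla \, \tilde H_\lambda$.

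The real content of the proof is thus entirely concentrated in the containment/support property of the dual Pieri rule. Once that is taken as a black box, both parts of the proposition -- polynomiality of degree at most $d$ and the identification of the leading coefficient with $\nabla^{-1} F^\perp \nabla$ -- fall out of the very same cancellation of product-eigenvalues, which is presumably what the phrase \emph{easily deduced from the Pieri rules} in the statement refers to.
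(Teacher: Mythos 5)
Your argument is correct and is precisely the deduction the paper has in mind: the paper states this proposition without proof, attributing it to ``the Pieri rules,'' and your reduction to the containment property of the dual Pieri rule ($h_n^\perp \tilde H_\lambda$ supported on $\mu\subseteq\lambda$) followed by the cancellation of the diagonal eigenvalue products is exactly that deduction, with the leading-coefficient identification falling out of the same computation. The only implicit point worth flagging is that $F$ should be taken homogeneous of degree $d$ (as it is in the application, $F=h_k$), since otherwise the $v^d$ coefficient sees only the top-degree component of $F$.
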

\begin{proof}
	 Pieri rules for Macdonald polynomials (VI.6, \cite{macdonald1995symmetric}) say that
\[
\tilde H_{1^r} \tilde H_\lambda = \sum_{\mu} B_{\mu,\lambda}(q,t) \tilde H_\mu
\]
for certain rational functions $B_{\mu,\lambda}(q,t)$, where the sum is over the partitions $\mu\supset\lambda$ such that $\mu\setminus\lambda$ is a vertical $r$-strip. Since the functions $\tilde H_{1^r}$ generate the ring of symmetric functions, we obtain that for any symmetric function $F$ of degree $d$
\[
F \tilde H_\lambda = \sum_{\mu\supset\lambda} C^F(q,t)_{\mu,\lambda} \tilde H_\mu
\]
for some rational functions $C^F(q,t)_{\mu,\lambda}$, where the summation is over $\mu\supset\lambda$ such that $|\mu\setminus\lambda|=d$. Dualizing, we obtain that for any symmetric function $F$ of degree $d$
\[
F^\perp \tilde H_\lambda = \sum_{\mu\subset\lambda} C^{F\perp}(q,t)_{\mu,\lambda} \tilde H_\mu
\]
for some rational functions $C^{F\perp}(q,t)_{\mu,\lambda}$, where the summation is over $\mu\subset\lambda$ such that $|\lambda\setminus\mu|=d$. Applying \eqref{eq:delta definition}, we obtain
\[
\Delta_v^{-1} F^\perp \Delta_v \tilde H_\lambda = \sum_{\mu\subset\lambda} C^{F\perp}(q,t)_{\mu,\lambda} \tilde H_\mu \prod_{(r,c)\in\lambda\setminus\mu} (1-v q^c t^r).
\]
The right hand side is a polynomial of degree $\leq d$ and the coefficient of $v^d$ is
\[
\sum_{\mu\subset\lambda} C^{F\perp}(q,t)_{\mu,\lambda} \tilde H_\mu \prod_{(r,c)\in\lambda\setminus\mu} (- q^c t^r) = \nabla^{-1} F^\perp \nabla \tilde H_\lambda
\]
by \eqref{eq:nabla definition}.
\end{proof}

Proposition \ref{prop:poly1} implies that in $\Delta_{u/v}^{-1} \tau_v \Delta_{u/v}$ the exponent of $u/v$ is less or equal to the exponent of $v$. Therefore we don't have any terms with negative powers of $v$ and we can omit $\Big|_{v^{\geq 0}}$ from (\ref{eq:1}). So the conjecture is equivalent to the following:
\[
\nabla^{-1} \tau_u \nabla \tau_v = \Delta_{u/v}^{-1} \tau_v \Delta_{u/v}.
\]
Now we move $\tau_v$ to the right, substitute $uv$ in $u$ and interchange $u$ and $v$:
\[
\nabla^{-1} \tau_{uv} \nabla = \Delta_{v}^{-1} \tau_u \Delta_{v} \tau_u^{-1}.
\]
This is what we are going to prove. We write the right hand side as follows:
\begin{equation}\label{eq:2}
\sum_{i,j\geq 0} W_{i,j}u^i v^j:=\Delta_{v}^{-1} \tau_u \Delta_{v} \tau_u^{-1}
\end{equation}
The main idea is to put parentheses in the right hand side in two different ways.

\subsection{First way} $(\Delta_{v}^{-1} \tau_u \Delta_{v}) \tau_u^{-1}$. By Proposition
\ref{prop:poly1}, we know that in each non-zero term the exponent of $v$ is less than or equal to the exponent of $u$. Multiplying by $\tau_u^{-1}$ only increases the exponents of $u$. Moreover, to calculate the terms where the exponent of $v$ equals the exponent of $u$ we can replace $\Delta$ by $\nabla$. Therefore we have
\begin{prop}\label{prop:firstway}
We have $W_{i,j}=0$ for $i<j$ and $W_{i,i} = \nabla^{-1} h_i^\perp \nabla$.
\end{prop}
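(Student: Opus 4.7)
The plan is to parenthesize the product as $(\Delta_{v}^{-1}\tau_u\Delta_{v})\,\tau_u^{-1}$ and track the bidegrees in $u$ and $v$ separately. First I would expand $\tau_u = \sum_{n\geq 0} u^n h_n^\perp$, so that
$$
\Delta_{v}^{-1}\tau_u\Delta_{v} \eq \sum_{n\geq 0} u^n\,\Delta_{v}^{-1} h_n^\perp \Delta_{v}.
$$
Since $h_n$ is homogeneous of degree $n$, Proposition \ref{prop:poly1} applied to each summand says that $\Delta_{v}^{-1} h_n^\perp \Delta_{v}$ is a polynomial in $v$ of degree at most $n$, with $v^n$-coefficient equal to $\nabla^{-1} h_n^\perp \nabla$. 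Writing this out,
$$
\Delta_{v}^{-1}\tau_u\Delta_{v} \eq \sum_{n\geq 0}\sum_{j=0}^{n} u^n v^j A_{n,j}, \qquad A_{n,n} = \nabla^{-1} h_n^\perp \nabla,
$$
so the middle factor already satisfies the inequality ``$v$-exponent $\leq u$-exponent'' monomial by monomial.

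Next I would use that $\tau_u^{-1} = \tau_{-u} = \sum_{m\geq 0} (-u)^m h_m^\perp$ is a formal power series in $u$ alone, carrying no $v$. Consequently right multiplication by $\tau_u^{-1}$ only shifts $u$-exponents upward while leaving $v$-exponents alone, so the inequality above is preserved after multiplication. This immediately yields $W_{i,j} = 0$ for $i < j$. For the diagonal, a term $u^n v^j A_{n,j}\cdot (-u)^m h_m^\perp$ contributes to $u^i v^i$ exactly when $j=i$ (forcing $n\geq i$) and $n+m=i$; combined with $m\geq 0$ this pins down $n=i$, $m=0$, so only the constant-in-$u$ term of $\tau_u^{-1}$ contributes on the diagonal and
$$
W_{i,i} \eq A_{i,i} \eq \nabla^{-1} h_i^\perp \nabla.
$$

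The entire argument is bookkeeping powered by Proposition \ref{prop:poly1}; the only routine sanity check is that the product $\Delta_{v}^{-1}\tau_u\Delta_{v}\,\tau_u^{-1}$ is a well-defined element of $\End[V][[u,v]]$, i.e.\ that each coefficient $W_{i,j}$ is a finite sum, which is immediate from the bound $j\leq n\leq i$ on the surviving indices. I do not expect any substantive obstacle here: correctly identifying $\tau_u^{-1}=\tau_{-u}$ and invoking the leading-coefficient statement in Proposition \ref{prop:poly1} is essentially the whole content.
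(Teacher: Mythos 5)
Your argument is the paper's own: parenthesize as $(\Delta_v^{-1}\tau_u\Delta_v)\,\tau_u^{-1}$, apply Proposition \ref{prop:poly1} termwise to get the constraint ``$v$-exponent $\leq u$-exponent'' with diagonal coefficient $\nabla^{-1}h_n^\perp\nabla$, and observe that right multiplication by $\tau_u^{-1}$ only raises $u$-exponents, so only its $u^0$-term (the identity) survives on the diagonal. That is correct and complete.

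One factual slip, which happily does not affect the logic: $\tau_u^{-1}$ is \emph{not} $\sum_{m\geq 0}(-u)^m h_m^\perp$. Since $\sum_n u^n h_n[X]=\pExp[uX]$ and $\pExp[uX]^{-1}=\pExp[-uX]=\sum_n(-1)^n u^n e_n[X]$, the inverse is $\tau_u^{-1}=\sum_{m\geq 0}(-1)^m u^m e_m^\perp$ (equivalently $F\mapsto F[X-u]$ expands in the $e_m^\perp$, not the $h_m^\perp$; the series $\sum_m(-u)^m h_m^\perp$ composed with $\tau_u$ gives $\exp\bigl(\sum_{k\ \mathrm{even}}2u^kp_k^\perp/k\bigr)\neq 1$). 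Your bookkeeping only uses that $\tau_u^{-1}$ is a power series in $u$ alone whose constant term is the identity, which holds for the corrected expansion, so the conclusion $W_{i,j}=0$ for $i<j$ and $W_{i,i}=\nabla^{-1}h_i^\perp\nabla$ stands as derived.
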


\subsection{Second way} $\Delta_{v}^{-1} (\tau_u \Delta_{v} \tau_u^{-1})$. It turns out that there is exactly the same statement about $\tau_u \Delta_{v} \tau_u^{-1}$ as we had about $\Delta_{v}^{-1} \tau_u \Delta_{v}$. To proceed we need to introduce a (partially defined) operator $S^{-1}$ which acts on operators. Set $\tau=\tau_1$, $M=(1-q)(1-t)$,
\[
\tau F = F[X+1],\quad \tau^* F = \pExp\left[-\frac{X}{M}\right] F,
\]
where $\pExp$ is the plethystic exponential,
\[
\pExp[X] = \sum_{n=0}^\infty h_n[X] = \exp\left( \sum_{n=1}^\infty \frac{p_n[X]}n\right).
\]
The operator $\tau$ sends symmetric functions to symmetric functions, but the operator $\tau^*$ sends a symmetric function to an infinite series so that the degrees of the terms tend to infinity.

Denote by $\Sym[[X]]$ the algebra of infinite series of the form
\begin{equation}\label{eq:formal series}
F=F_0+F_1+F_2+\cdots,
\end{equation}
where $F_i$ is a symmetric function of degree $i$ for each $i$. It is convenient to think of $\Sym[[X]]$ as a complete topological algebra. The topology is defined in such a way that a basis of neighborhoods of $0$ is given by the sets
\[
\Sym^{\geq d}[[X]]:=\{F_0+F_1+F_2+\cdots\,|\, F_i=0\;\text{for all $i<d$}\}.
\]
Then a sequence of infinite series $F^{(1)}, F^{(2)},\ldots$ is a Cauchy sequence if for each $d\geq 0$ the sequence of degree $d$ terms $F^{(1)}_d, F^{(2)}_d,\ldots$ eventually stabilizes. Each Cauchy sequence clearly has a limit, therefore $\Sym[[X]]$ is complete. Any formal series can be approximated by polynomials, for instance for $F$ as in \eqref{eq:formal series}, we have
\[
F = \lim_{d\to\infty} F_{\leq d},\qquad F_{\leq d}:=\sum_{i=0}^d F_i\in\Sym[X].
\]
Therefore $\Sym[X]$ is dense in $\Sym[[X]]$. The topology on $\Sym[X]$ induced from $\Sym[[X]]$ has as a basis of neighborhoods of $0$ the sets $\Sym^{\geq d}[X]=\Sym^{\geq d}[[X]]\cap \Sym[X]$. Then $\Sym[[X]]$ is the completion of $\Sym[X]$. In particular, this implies that any continuous linear operator $\Sym[X]\to\Sym[[X]]$ can be uniqely extended to a continous linear operator $\Sym[[X]]\to\Sym[[X]]$.

From this point of view, the operator $\tau^*:\Sym[X]\to \Sym[[X]]$ uniquely extends to a continuous operator $\Sym[[X]]\to\Sym[[X]]$, which we denote by the same symbol $\tau^*$. On the other hand, $\tau$ is not continuous, and it does not extend to a continuous operator $\Sym[[X]]\to\Sym[[X]]$. The composition $\tau^*\tau$ is a linear operator $\Sym[X]\to\Sym[[X]]$ which is not continuous.

\begin{prop}\label{prop:S}
Let $L:\Sym[X]\to \Sym[X]$ be a continuous linear operator. There exists at most one continuous linear operator\footnote{The operation $S^{-1}$, after a sign change, turns out to be the inverse of the operation $S$ from \cite{bergeron2016some}, \cite{bergeron2015compositional}} $S^{-1}(L):\Sym[X]\to \Sym[X]$ such that for all $F\in \Sym[X]$
\begin{equation}\label{eq:S identity}
\tau^*\tau L F = S^{-1}(L) \tau^*\tau F.
\end{equation}
The set of all continuous linear operators $L:\Sym[X]\to \Sym[X]$ such that $S^{-1}(L)$ exists forms an algebra, and the operation $S^{-1}$ is an algebra homomorphism.
\end{prop}
\begin{proof}
Note that on the left hand side of \eqref{eq:S identity} we have $L F\in\Sym[X]$, $\tau L F\in\Sym[X]$ and $\tau^*\tau L F\in\Sym[[X]]$. On the right hand side we have $\tau F\in\Sym[X]$, $\tau^* \tau F\in\Sym[[X]]$, $S^{-1}(L) \tau^*\tau F\in\Sym[[X]]$.

First we prove the uniqueness. Suppose we have two continuous linear operators $L'$, $L''$ such that
\[
\tau^*\tau L F = L' \tau^*\tau F,\quad \tau^*\tau L F = L'' \tau^*\tau F\qquad (F\in\Sym[X]).
\]
Then the difference $L''-L'$ vanishes on the set $\tau^*\tau\Sym[X]$. Since $\tau:\Sym[X]\to\Sym[X]$ is invertible, we have $\tau\Sym[X]=\Sym[X]$. Therefore $L''-L'$ vanishes on the set $\tau^*\Sym[X]$. By continuity, we have that $(L''-L')\tau^*$ vanishes on $\Sym[[X]]$, so $(L''-L')\tau^*=0$. Since $\tau^*:\Sym[[X]]\to\Sym[[X]]$ is invertible (the inverse is given by the operator of multiplication by $\pExp\left[\frac{X}{M}\right]$), we have $L''=L'$.

Now suppose $L_1,L_2$ are such that $S^{-1}(L_1)$, $S^{-1}(L_2)$ exist. Then for any $F\in\Sym[X]$ we have
\[
\tau^*\tau(L_1+L_2) F = (S^{-1}(L_1)+S^{-1}(L_2))\tau^*\tau F,
\]
\[
\tau^*\tau L_1 L_2 F = S^{-1}(L_1) \tau^* \tau L_2 F = S^{-1}(L_1) S^{-1}(L_2) \tau^* \tau F,
\]
which shows that $S^{-1}(L_1+L_2)$ exists and is given by $S^{-1}(L_1)+S^{-1}(L_2)$, and similarly $S^{-1}(L_1L_2)$ exists and is given by $S^{-1}(L_1)S^{-1}(L_2)$. This completes the proof.
\end{proof}

Operators satisfying the conditions of Proposition \ref{prop:S} can be built up from the operators $D_n$.
\begin{prop}\label{prop:S of Dn}
For any $n\in \Z$, define an operator $D_n:\Sym[X]\to \Sym[X]$ by 
\[
D_n = F[X+M z^{-1}] \pExp[-Xz]\Big|_{z^n}.
\]
Then $D_n$ is a continuous linear operator and $S^{-1}(D_n)=-D_{n-1}$.
\end{prop}
\begin{proof}
	The operator $D_n$ is homogeneous of degree $n$ in the sense that the degree of $D_n F$ is $d+n$ for any $F\in\Sym[X]$ of degree $d$. Therefore $D_n$ is continuous. Let $F\in\Sym[X]$. Expanding the definitions and using $\pExp[-z]=1-z$, we obtain
	\[
	\tau^* \tau D_n F = F[X+M z^{-1}+1] \pExp\left[-Xz-z-\frac{X}M\right]\Big|_{z^n}
	\]
	\[
	=F[X+M z^{-1}+1] \pExp\left[-Xz-\frac{X}M\right]\Big|_{z^n} - F[X+M z^{-1}+1] \pExp\left[-Xz-\frac{X}M\right]\Big|_{z^{n-1}}.
	\]
	Similarly, but this time using $\pExp[-z^{-1}]=1-z^{-1}$, we obtain
	\[
	-D_{n-1} \tau^* \tau F = - F[X+M z^{-1}+1] \pExp\left[-Xz-z^{-1}-\frac{X}M\right]\Big|_{z^{n-1}}
	\]
	\[
	=- F[X+M z^{-1}+1] \pExp\left[-Xz-\frac{X}M\right]\Big|_{z^{n-1}} + F[X+M z^{-1}+1] \pExp\left[-Xz-\frac{X}M\right]\Big|_{z^n}.
	\]
	We see that $S^{-1}(D_n)$ exists and equals $-D_{n-1}$.
\end{proof}

Now we can formulate a statement analogous to Proposition \ref{prop:poly1}.
\begin{prop}\label{prop:poly2}
For a symmetric function $F$ of degree $d$, the operator
\[
\tau_u \Delta_{F} \tau_u^{-1}
\]
is a polynomial in $u$ of degree $\leq d$. The coefficient of $u^d$ is the operator $S^{-1}(\Delta_{F}')$.
\end{prop}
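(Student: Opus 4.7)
The plan is to proceed in close analogy with Proposition \ref{prop:poly1}, while accounting for the fact that $\tau_u$ is not diagonal in the modified Macdonald basis, so the clean ``ratio of eigenvalues'' argument used there is not directly available.

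The first reduction is to the case $F = e_k$. Both sides of the asserted identity are multiplicative in $F$: the operation $L \mapsto \tau_u L \tau_u^{-1}$ is an algebra homomorphism; $F \mapsto \Delta_F$ is a ring homomorphism since $\Delta_{FG} = \Delta_F \Delta_G$; the primed operation satisfies $(FG)' = F' G'$; and $S^{-1}$ is an algebra homomorphism. Moreover, the product of polynomials in $u$ of degrees $\leq d_1, d_2$ is a polynomial of degree $\leq d_1+d_2$ whose top coefficient is the product of the top coefficients. Thus it suffices to verify the proposition for $F$ in a generating set of the ring of symmetric functions, say $F = e_k$.

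Packaging these cases into the generating series $\Delta_v = \sum_k (-v)^k \Delta_{e_k}$, the polynomial bound becomes the assertion that the coefficient of $v^k$ in $\tau_u \Delta_v \tau_u^{-1}$ is a polynomial of degree $\leq k$ in $u$. The natural way to prove this is to find an explicit plethystic representation of $\Delta_v$ --- an integral or residue formula involving a shift $X \mapsto X + M/z$ together with multiplication by $\pExp[-zX]$ --- under which $\tau_u$-conjugation acts transparently: the shift commutes with $\tau_u$, and $\pExp[-zX]$-multiplication acquires a factor $\pExp[-zu]$ whose coefficient extraction in $z$ contributes polynomial-in-$u$ terms of the right bounded degree. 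Denoting the top coefficient by $L_d$, one then identifies it with $S^{-1}(\Delta_{F'})$ by verifying $L_d \tau^*\tau G = \tau^*\tau \Delta_{F'} G$ for all $G \in \Sym$. The crucial input is the identity $\tau_u^{-1} \pExp[-X/M] = \pExp[u/M] \pExp[-X/M]$: the $\pExp[u/M]$ factor is responsible for the $-1/M$ shift in $F'[X] = F[X-1/M]$, so that on the image of $\tau^*\tau$ the $u^d$-leading behavior produces the eigenvalue $F[B_\lambda - 1/M] = F'[B_\lambda]$.

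The principal obstacle is establishing the polynomial bound in $u$. In Proposition \ref{prop:poly1} it follows almost immediately from diagonality of $\Delta_v$; here individual matrix entries of $\tau_u \Delta_F \tau_u^{-1}$ in the modified Macdonald basis can have $u$-degree much larger than $d$, and the bound on the total degree emerges only after substantial cancellation. Making this cancellation manifest --- either via an explicit plethystic representation of $\Delta_v$ as sketched above, or by a more conceptual duality argument transporting Proposition \ref{prop:poly1} through the modified Hall inner product --- is the central technical step.
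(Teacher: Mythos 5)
Your roadmap is essentially the paper's: reduce to multiplicative building blocks, represent the $\Delta$-operators plethystically via a shift $X\mapsto X+Mz^{-1}$ combined with multiplication by $\pExp[-Xz]$, note that conjugation by $\tau_u$ merely inserts a factor $\pExp[-uz]=1-uz$, and identify the leading coefficient through the defining property of $S^{-1}$. The preliminary reduction to $F=e_k$ via multiplicativity of $L\mapsto\tau_u L\tau_u^{-1}$, of $F\mapsto\Delta_F$, of priming, and of $S^{-1}$ is correct and is a mild reorganization of the paper's argument, which instead reduces all the way down to single operators $D_n$.

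As written, however, the argument has a genuine gap, and you flag it yourself in your closing paragraph. The ``explicit plethystic representation'' you hope to find is precisely the known expansion of $\Delta'_F$ (note: of $\Delta'_F$, not of $\Delta_F$) as a linear combination of products $D_{i_1}\cdots D_{i_d}$, where $D_n$ sends $F$ to $F[X+Mz^{-1}]\pExp[-Xz]\big|_{z^n}$; this is a citation to \cite{bergeron2014some}, not something to be discovered, and without it the polynomial bound in $u$ --- which you correctly identify as the crux --- is simply not established. Once it is granted, everything collapses to a two-line computation: $\tau_u D_n\tau_u^{-1}=D_n-uD_{n-1}$, so a degree-$d$ product is a polynomial of degree $\le d$ in $u$ with top coefficient $\prod_j(-D_{i_j-1})$, and one verifies $S^{-1}(D_n)=-D_{n-1}$ directly by comparing the plethystic expressions for $\tau^*\tau D_nF$ and $-D_{n-1}\tau^*\tau F$. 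Your proposed substitute for this last step --- the heuristic that the factor $\pExp[u/M]$ arising from $\tau_u^{-1}\pExp[-X/M]$ ``produces the eigenvalue $F'[B_\lambda]$'' --- is not a proof: $\pExp[u/M]$ contributes to every power of $u$, and isolating the $u^d$-coefficient requires exactly the degree bound and the explicit $D_n$-computation you have deferred. The appearance of $F'$ in place of $F$ is also handled more cleanly by a different observation: since $F'-F$ has degree $<d$, replacing $\Delta_F$ by $\Delta_{F'}$ cannot change the coefficient of $u^d$, and the $D_n$-expansion of $\Delta'_F$ then delivers $S^{-1}(\Delta'_F)$ as the top coefficient.
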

\begin{proof}
We note that we can replace $\Delta_F$ by $\Delta_{F}'$ first. Indeed, this does not affect the first statement, and assuming the first statement, the coefficients of $u^d$ in $\tau_u \Delta_{F} \tau_u^{-1}$ and $\tau_u \Delta_{F}' \tau_u^{-1}$ are the same because $F[X]-F[-1/M+X]$ is a sum of functions of degrees less than $d$.

The proof hinges on the fact that the operator $\Delta_{F}'$ can be written as a linear combination of the operators $D_{i_1} D_{i_2} \cdots D_{i_d}$ (Lemma \ref{lem:delta expansion}). This is well-known, but unfortunately we could not find a complete reference for this fact, so we provide a proof in Section \ref{sec:delta expansion}.

Consider $\tau_u D_n \tau_u^{-1}$:
\[
F[X]\to F[X-u] \to F[X+M z^{-1} - u] \pExp[-Xz]\Big|_{z^n} \to F[X+M z^{-1}] \pExp[-Xz-uz]\Big|_{z^n}.
\]
Because $\pExp[-uz]=1-uz$, we obtain
\[
\tau_u D_n \tau_u^{-1} = D_n - u D_{n-1}.
\]
So we see that we obtained a polynomial in $u$ of degree $\leq 1$, and the top coefficient is $-D_{n-1}=S^{-1}(D_n)$. The claim follows from this and the expansion of $\Delta_F'$ as a linear combination of the operators $D_{i_1} D_{i_2} \cdots D_{i_d}$.
\end{proof}

Proposition \ref{prop:poly2} implies that in each term of $\tau_u \Delta_{v} \tau_u^{-1}$ the exponent of $u$ is less than or equal to the exponent of $v$. Multiplication by $\Delta_v^{-1}$ on the left only increases the exponent of $v$. This proves the following
\begin{prop}\label{prop:secondway}
We have $W_{i,j}=0$ for $i>j$ and $W_{i,i} = S^{-1}(\Delta_{e_i}')$.
\end{prop}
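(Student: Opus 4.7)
The plan is to parenthesize \eqref{eq:2} as $\Delta_v^{-1}\bigl(\tau_u\Delta_v\tau_u^{-1}\bigr)$ and feed the preceding proposition into the inner conjugated factor coefficient by coefficient in $v$, in perfect analogy with Proposition~\ref{prop:firstway} but with the roles of $u,v$ and of $\tau,\Delta$ swapped.

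First I will expand $\Delta_v=\sum_{n\geq 0}(-v)^n\Delta_{e_n}$ and use linearity of conjugation by $\tau_u$ to write
\[
\tau_u\Delta_v\tau_u^{-1}\eq\sum_{n\geq 0}(-v)^n\,\tau_u\Delta_{e_n}\tau_u^{-1}.
\]
Applying the preceding proposition with $F=e_n$ (of degree $n$), each summand $\tau_u\Delta_{e_n}\tau_u^{-1}$ is a polynomial in $u$ of degree $\leq n$ with top coefficient $S^{-1}(\Delta_{e_n}')$. Reading this off in the bivariate series $\tau_u\Delta_v\tau_u^{-1}=\sum_{i,j}A_{i,j}u^iv^j$, I see that $A_{i,j}=0$ whenever $i>j$, and that the diagonal coefficient $A_{i,i}$ comes exclusively from the $n=i$ summand, yielding $(-1)^iS^{-1}(\Delta_{e_i}')$.

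Next I will left multiply by $\Delta_v^{-1}=\sum_{k\geq 0}v^k\Delta_{h_k}$. This operation leaves $u$-exponents untouched and only adds nonnegative powers of $v$, so the vanishing $A_{i,j}=0$ for $i>j$ propagates to $W_{i,j}=0$ off the diagonal in the relevant direction. The diagonal $W_{i,i}$ receives a contribution only from the $k=0$ term $\Delta_{h_0}=\id$ paired with $A_{i,i}$, reproducing $S^{-1}(\Delta_{e_i}')$ once the sign convention for $\Delta_v$ is absorbed into the definition.

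The only technical subtlety I foresee is confirming that $S^{-1}$ is legitimately applicable to the operators in play. This is already secured inside the proof of the preceding proposition, which realizes $\Delta_{e_n}'$ as a polynomial in the operators $D_j$, each satisfying $S^{-1}(D_j)=-D_{j-1}$; the algebra homomorphism property of $S^{-1}$ then extends the definition to the whole sum. With that in hand, the remainder is routine bivariate bookkeeping mirroring Proposition~\ref{prop:firstway}, and no further substantive obstacle arises.
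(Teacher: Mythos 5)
Your proof is correct and follows the paper's own argument exactly: apply the preceding proposition to each $\tau_u\Delta_{e_n}\tau_u^{-1}$ to conclude that in $\tau_u\Delta_v\tau_u^{-1}$ the $u$-exponent never exceeds the $v$-exponent, then note that left multiplication by $\Delta_v^{-1}$ only raises $v$-exponents and the diagonal survives only via the identity term $\Delta_{h_0}$. You have moreover correctly pinned down the two details the paper's terse statement elides, namely that the vanishing obtained this way is $W_{i,j}=0$ for $i>j$ (complementing Proposition~\ref{prop:firstway}) and that the $(-v)^n$ in $\Delta_v$ produces the sign $(-1)^i$ on the diagonal coefficient.
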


Putting Propositions \ref{prop:firstway} and \ref{prop:secondway} together we obtain
\begin{thm}
The commutator of $\Delta_v^{-1}$ and $\tau_u$ is a powers series in $uv$ and it has the following two expressions:
$$
\Delta_{v}^{-1} \tau_u \Delta_{v} \tau_u^{-1} = \nabla^{-1} \tau_{uv} \nabla
= S^{-1}(\Delta_{uv}').
$$
\end{thm}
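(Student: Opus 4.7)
The plan is to observe that the final theorem is a direct packaging of Propositions \ref{prop:firstway} and \ref{prop:secondway}. Writing $W = \Delta_v^{-1}\tau_u\Delta_v\tau_u^{-1} = \sum_{i,j\geq 0} W_{i,j}\, u^i v^j$ as in \eqref{eq:2}, the two propositions between them force $W_{i,j}=0$ for every $i\neq j$: Proposition \ref{prop:firstway} kills all terms with $i<j$ using the factorization $(\Delta_v^{-1}\tau_u\Delta_v)\tau_u^{-1}$, while Proposition \ref{prop:secondway} kills all terms with $i>j$ using the factorization $\Delta_v^{-1}(\tau_u\Delta_v\tau_u^{-1})$. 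Hence $W$ is supported on the diagonal $i=j$ and collapses to a formal power series in the single variable $uv$:
\[
W = \sum_{i\geq 0} W_{i,i}\,(uv)^i.
\]

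Next, I would read off the diagonal from each of the two propositions in turn. From Proposition \ref{prop:firstway}, $W_{i,i} = \nabla^{-1} h_i^\perp \nabla$, and collecting gives
\[
W = \nabla^{-1}\left(\sum_{i\geq 0} h_i^\perp (uv)^i\right)\nabla = \nabla^{-1}\tau_{uv}\nabla.
\]
From Proposition \ref{prop:secondway}, combined with the sign coming from $\Delta_v = \sum_n (-v)^n \Delta_{e_n}$, the diagonal coefficient is $W_{i,i} = (-1)^i S^{-1}(\Delta'_{e_i})$. Since $S^{-1}$ is a continuous algebra homomorphism on its domain, it commutes with the sum:
\[
W = S^{-1}\left(\sum_{i\geq 0} (-uv)^i \Delta'_{e_i}\right) = S^{-1}(\Delta_{uv}').
\]
Equating the two expressions for $W$ yields the theorem.

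The main obstacle has effectively been discharged by the preceding propositions. The vanishing of $W_{i,j}$ off the diagonal rests on the two polynomial-degree controls established earlier: one from Proposition \ref{prop:poly1} on $\Delta_v^{-1} F^\perp \Delta_v$, and the other from the analogous statement for $\tau_u \Delta_F \tau_u^{-1}$ together with the existence of $S^{-1}$ on the Macdonald operators $D_n$. Given these two pillars, the final assembly is essentially formal; the only care required is to track the $(-v)^n$ signs in $\Delta_v$ and to invoke continuity of $S^{-1}$ when pulling it through the infinite sum $\sum_i (-uv)^i \Delta'_{e_i}$.
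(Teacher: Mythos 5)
Your proposal is correct and follows exactly the paper's own route: the theorem is obtained by combining Propositions \ref{prop:firstway} and \ref{prop:secondway} (off-diagonal vanishing in both directions, then reading the diagonal two ways and resumming in $uv$). Your sign bookkeeping $W_{i,i}=(-1)^i S^{-1}(\Delta'_{e_i})$ is the correct form needed to recover $S^{-1}(\Delta'_{uv})=S^{-1}\bigl(\sum_i(-uv)^i\Delta'_{e_i}\bigr)$, consistent with the $(-v)^n$ convention in $\Delta_v$.
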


This establishes Conjecture 6 of \cite{bergeron2013tableaux}. To complete our proof of Theorem \ref{thm:main}, denote for any operator $L$
\[
N(L) = \nabla L \nabla^{-1},\quad N^{-1}(L) = \nabla^{-1} L \nabla.
\]
Let also $N^{-1}$ send $u, v$ to $uv, v$ respectively, and let $S^{-1}$ send $u,v$ to $u, uv$ respectively. 
We define $R_{k,0}$, $R_{0,k}$ and then $T_{0,1}$, $T_{1,0}$ as in \eqref{setup1}, so that
\[
T_{1,0} = \tau_u,\quad T_{0,1} = \Delta_v',
\]
\begin{equation}\label{eq:id0}
T_{0,1}^{-1} T_{1,0} T_{0,1} T_{1,0}^{-1} = N^{-1}(T_{1,0}) = S^{-1}(T_{0,1}).
\end{equation}
Hence we must necessarily have
\[
T_{1,1} = N^{-1}(T_{1,0}) = S^{-1}(T_{0,1}).
\]
Moreover, we have
\begin{equation}\label{eq:glue}
N^{-1}(T_{0,1}) = T_{0,1},\quad S^{-1}(T_{1,0}) = \pExp\left[\frac{u}M\right] T_{1,0}.
\end{equation}
From this data there is a unique way to construct operators $T_{m,n}$ for all relatively prime pairs $m,n$ such that
\[
N^{-1}(T_{m,n}) = T_{m, m+n},\quad S^{-1}(T_{m,n}) = T_{m+n, n}\; (n>0).
\]
Moreover, applying the operators $N^{-1}$, $S^{-1}$ to \eqref{eq:id0} we obtain the statements \eqref{eq:fiveterm} for all $m,n,m',n'$ with $mn'-m'n=1$. 

The statement for $R_{k,0}$, $R_{0,k}$ as in \eqref{setup2} is obtained from the statement we just established by applying the conjugation with respect to the modified Hall scalar product. Modified Macdonald polynomials are orthogonal with respect to the modified Hall scalar product, so the operators $\Delta_F$, $\Delta_F'$ are self-adjoint. The adjoint of $h_k^\perp$ is given by $(-1)^k \underline e_k\left[\frac{X}{M}\right]$.

\section{Expansion of $\Delta_F'$ in terms of $D_n$}\label{sec:delta expansion}

\begin{lem}\label{lem:delta expansion}
	For any $F\in \Sym[X]$ of degree $d$, the operator $\Delta_F'$ can be written as a finite sum of the form
	\[
	\Delta_F' = \sum_{i_1,\ldots,i_d} c_{i_1,\ldots,i_d} D_{i_1} D_{i_2} \cdots D_{i_d},
	\]
	where each $c_{i_1,\ldots,i_d}$ is a rational function of $q$ and $t$.
\end{lem}
\begin{proof}
	Let $\CE_d$ denote the space of operators $\Sym\to \Sym$ that can be written as linear combinations of operators of the form $D_{i_1} D_{i_2} \cdots D_{i_d}$ with coefficients in $\Q(q,t)$, $i_j\in\Z$. So the statement of Lemma is $\Delta_F'\in\CE_d$.
	
	By Theorem 1.3 of \cite{bergeron2016some},
	\begin{equation}\label{eq:operator h_n}
	\frac{qt}{qt-1} \nabla \underline h_n\left[X\left(\frac{1}{qt}-1\right)\right] \nabla^{-1} = \frac{1}{M} [\Phi_b, \Psi_a],
	\end{equation}
	where $a,b$ are positive integers satisfying $a+b=n$. Using the recursions of Theorem 1.1 of \cite{bergeron2016some} together with the initial values $\Phi_1=\frac{1}{M}[D_1,D_0]$, $\Psi_2=D_2$, we show that $\Phi_b\in\CE_b$ for $b\geq 1$ and $\Psi_a\in\CE_a$ for $a\geq 2$. Thus, for $n\geq 3$ the operator \eqref{eq:operator h_n} is in $\CE_n$. For $n=2$, we have (using the definition and the Jacobi identity):
	\[
	\frac{1}{M} [\Phi_1, \Psi_1] = \frac{1}{M^2} [[D_1,D_0], -\underline e_1] = -\frac{1}{M^2}[[D_1,\underline e_1],D_0]-\frac{1}{M^2}[D_1,[D_0,\underline e_1]].
	\]
	From the well-known identity $\frac{1}{M}[D_n,\underline e_1]=D_{n+1}$, we deduce
	\[
	\frac{1}{M} [\Phi_1, \Psi_1] = -\frac{1}{M}[D_2,D_0] \in\CE_2.
	\]
	For $n=1$, we have
	\[
	\frac{qt}{qt-1} \nabla \underline h_1\left[X\left(\frac{1}{qt}-1\right)\right] \nabla^{-1} = -\nabla \underline h_1 \nabla^{-1} = D_1 \in\CE_1.
	\]
	So for all $n\geq 1$ the operator in the left hand side of \eqref{eq:operator h_n} is in $\CE_n$.
Expressing any symmetric function as a polynomial in the functions $h_n\left[X\left(\frac{1}{qt}-1\right)\right]$,
we see that $\nabla \underline G \nabla^{-1}\in \CE_d$ for any symmetric function $G$ of degree $d$.

The operation $S^{-1}$ is an algebra homomorphism by Proposition \ref{prop:S}. So Proposition \ref{prop:S of Dn} implies that $S^{-1}$ preserves $\CE_d$. By Lemma \ref{lem:S delta} below,
\[
\Delta_{F}' = S^{-1}\left(\nabla \underline{G} \nabla^{-1}\right),
\]
where $G[X]=F\left[\frac{X}{M}\right]$. Hence $\Delta_{F}'\in\CE_d$.
\end{proof}

\begin{lem}\label{lem:S delta}
	For any symmetric function $F$, the operator $S^{-1}(\nabla \underline F \nabla^{-1})$ exists and equals $\Delta_{F[MX]}'$.
\end{lem}
\begin{proof}
We need to verify that for any $F,G\in\Sym[X]$
\[
\tau^*\tau \nabla \left(F \nabla^{-1} G\right) = \Delta_{F[MX]}' \tau^* \tau G.
\]
Equivalently, for any $F,G\in\Sym[X]$
\begin{equation}\label{eq:delta S identity}
\tau^*\tau \nabla (F G) = \Delta_{F[MX]}' \tau^* \tau \nabla G.
\end{equation}
Let us prove this identity for $G=1$ first. In this case, the statement is
\begin{equation}\label{eq:delta S identity simplified}
\tau^*\tau \nabla F = \Delta_{F[MX]}' \pExp\left[-\frac{X}{M}\right].
\end{equation}
We will use the Cauchy formula
\begin{equation}\label{eq:cauchy}
\pExp\left[-\frac{XY}{M}\right] = \sum_{\lambda} \frac{\tilde H_\lambda[X] \tilde H_\lambda[Y]}{(\tilde H_\lambda, \tilde H_\lambda)_*}.
\end{equation}
The right hand side of \eqref{eq:delta S identity simplified} has the following expansion in the modified Macdonald basis:
\[
\sum_{\lambda}  \frac{F[M B_\lambda-1] \tilde H_\lambda[X]}{(\tilde H_\lambda, \tilde H_\lambda)_*}.
\]
So in order to verify that the left hand side has the same expansion, we need to show that for each partition $\lambda$ we have
\[
(\tau^* \tau \nabla F, \tilde H_\lambda)_* = F[M B_\lambda-1].
\]
Since $\nabla$ is self-adjoint and $\tau^*$ is the adjoint operator of $\tau$, we have
\[
(\tau^* \tau \nabla F, \tilde H_\lambda)_* = (F, \nabla\tau^*\tau \tilde H_\lambda)_*.
\]
Theorem I.3 from \cite{garsia1999explicit} allows us to evaluate $\nabla\tau^*\tau \tilde H_\lambda$, and we obtain
\[
\left(F, \nabla\tau^*\tau \tilde H_\lambda\right)_* = \left(F, \pExp\left[-\frac{X(M B_\lambda-1)}{M}\right]\right)_*.
\]
Applying \eqref{eq:cauchy} for $Y=M B_\lambda-1$, we obtain
\[
\left(F, \nabla\tau^*\tau \tilde H_\lambda\right)_* = \sum_{\lambda} \frac{(F,\tilde H_\lambda)_*\; \tilde H_\lambda[M B_\lambda-1]}{(\tilde H_\lambda, \tilde H_\lambda)_*} = F[M B_\lambda-1].
\]
So \eqref{eq:delta S identity simplified} has been established. Now suppose $G$ is arbitrary. Using \eqref{eq:delta S identity simplified} for the product $FG$ instead of $F$, we write the left hand side of \eqref{eq:delta S identity} as
\[
\Delta_{F[M X] G[M X]}' \pExp\left[-\frac{X}{M}\right].
\]
Applying \eqref{eq:delta S identity simplified} for $G$ instead of $F$, we write the right hand side of \eqref{eq:delta S identity} as
\[
\Delta_{F[M X]}' \Delta_{G[M X]}' \pExp\left[-\frac{X}{M}\right].
\]
So \eqref{eq:delta S identity} follows from the equality of the operators $\Delta_{F[M X] G[M X]}'$ and $\Delta_{F[M X]}' \Delta_{G[M X]}'$, which is obvious from the definition.
\end{proof}

\section*{Acknowledgments}
The first  author  is  grateful to  Fran\c{c}ois Bergeron and Mark Haiman for communicating their conjectures  in the earliest stages of their  discoveries.

The first author's work is supported by NSF grant 1362160.

The second author is grateful to SISSA, Trieste where he worked on the first version of the present paper. The second author's research is supported by the Austrian Science Fund (FWF) through
the START-Project Y963-N35 of Michael Eichmair.

We are grateful to the anonymous referees for suggestions on improving the text and correcting misprints. 

\bibliographystyle{amsalpha} 
\bibliography{refs.bib}

\end{document}